\documentclass[letterpaper, 10pt, conference]{IEEEtran}
\IEEEoverridecommandlockouts
\usepackage{float}
\usepackage{amsmath, amssymb, amsthm}
\usepackage{multirow}
\usepackage{empheq}
\usepackage{cite}
\usepackage{enumerate}
\usepackage[ruled,linesnumbered]{algorithm2e}
\usepackage[colorlinks=true,linkcolor=blue,citecolor=blue]{hyperref}
\usepackage{xcolor}
\makeatletter
\def\subsection{\@startsection{subsection}{2}{0em}%
  {4pt plus 1pt minus 1pt}%
  {2pt plus 1pt minus 1pt}%
  {\normalfont\normalsize\bfseries}}
\makeatother
\newtheorem{thm}{Theorem}
\newtheorem{lem}{Lemma}

\newtheorem{prop}{Proposition}
\newtheorem{ass}{Assumption}
\newtheorem{defin}{Definition}
\newtheorem{rem}{Remark}
\newenvironment{Theorem}{\begin{thm}}{\hfill$\square$\end{thm}}

\newenvironment{Assumption}{\begin{ass}}{\hfill$\bullet$\end{ass}}

\newenvironment{Remark}{\begin{rem}}{\hfill$\bullet$\end{rem}}

\newcommand{\R}{\mathbb{R}}
\newcommand{\dd}{\mathrm{d}}
\newcommand{\norm}[1]{\left\|#1\right\|}

\newcommand{\dmin}{d_{\min}}
\newcommand{\dmax}{d_{\max}}
\newcommand{\Rcal}{\mathcal{R}}
\newcommand{\Rhat}{\widehat{\mathcal{R}}}
\newcommand{\Rr}{\mathcal{R}_r}
\newcommand{\Ccal}{\mathcal{C}}
\newcommand{\dH}{d_{\mathrm{H}}}
\newcommand{\nh}{n_h}
\title{\vspace{-20pt}\bf Certified Reachable Sets for Nonlinear Reaction--Diffusion Systems}\vspace{-16pt}

\author{
    Mohamed Amine Ouchdiri$^{1}$, Mohamed Maghenem$^{2}$,
    Saad Benjelloun$^{3}$ and Adnane Saoud$^{1}$\\
    \thanks{$^{1}$College of Computing, University Mohammed VI Polytechnic, Benguerir, Morocco.
   $^{2}$Univ. Grenoble Alpes, CNRS, Grenoble-INP, GIPSA-lab, F-38000 Grenoble, France. $^{3}$De Vinci Higher Education, De Vinci Research Center, Paris, France.}
}

\begin{document}
\maketitle

\pagestyle{empty}
\begin{abstract}
Reachability analysis for dynamical systems seeks to compute a set containing all reachable states at a given time. Compared to ordinary differential equations (ODEs), the analysis of nonlinear reaction--diffusion PDEs with parametric uncertainties remains largely underexplored, due to the infinite-dimensional state space and the variety of solutions under different parameters. We address this through a three-step procedure: 1) Finite Element Methods (FEM)s to discretise the space and generate a finite-dimensional FEM-based model, 2) Proper Orthogonal Decomposition (POD) to build a Reduced-Order Model (ROM), and 3) set-based reachability-analysis methods applied to the ROM. We propose a framework that enables us to derive explicit upper bounds on the approximation errors introduced at each stage of the pipeline. In particular, we quantify the discrepancy between trajectories of the original PDE and those of the FEM-based discretization, as well as the error between the FEM-based model and the reduced-order model. Importantly, these bounds are shown to hold uniformly over the considered set of parameters. By combining these error estimates, we obtain an over-approximation of the reachable set of the original PDE. The approach is illustrated on the Allen--Cahn equation and a logistic growth PDE.
\end{abstract}
\vspace{-5.5pt}
\section{Introduction}\label{sec1}
\vspace{-3.5pt}
Reaction-diffusion equations model substances spreading while undergoing chemical or biological transformations, with applications in enzyme kinetics~\cite{murray2003}, population dynamics~\cite{pao1992}, and synthetic developmental biology~\cite{ouchdiri2025optimal}. When some parameters are uncertain, the reachability-analysis problem asks for the states the system can reach for all possible parameters.  

For ODEs, reachability analysis is a mature subject. Indeed, set-propagation methods, based on zonotopes~\cite{girard2005} and interval arithmetics~\cite{scott2013}, compute guaranteed 
over-approximations of the reachable set, and tools, such as CORA~\cite{althoff2021}, make these methods readily applicable.
For PDEs, however, reachability analysis is relatively under developed, in the sense of allowing general parameter dependency and nonlinear reactions. For instance, \cite{tran2018reachPDE}
constructs an over-approximation of the reachable set for the
one-dimensional linear parabolic equation, which is affine in the parameters, using FEM,
interpolation, and numerical error correction. Furthermore, for linear parabolic PDEs, under interval
uncertainty on the initial condition, the disturbance, and the
measurements,~\cite{kharkovskaia2016} uses FEM, tailored to a monotone interval observer, to produce upper and lower bounds on the PDE's solution.  This approach requires output measurements and a specific structure for the FEM-based model. 
For nonlinear reaction-diffusion PDEs, comparison theorems are used in \cite{pao1992} to construct intermediate systems whose solutions define an envelope that contains the PDE's solution. Applying this method to parametric
reachability would require knowing a parameter profile that generate 
a solution that upperbounds all the remaining ones for different admissible parameter profiles. However, this knowledge is hard to obtain in most applications.

This paper studies reachability analysis for nonlinear
reaction-diffusion PDEs, where the diffusion coefficient, the reaction term, and the initial condition involve 
unknown parameters. As a result, we construct an over-approximation of the reachable set, over all possible parameters values, using a three-step process. First, we use FEM to discretise the space and generate a finite-dimensional FEM-based model \cite{thomee2006}. Then, we leverage the FEM-based model using POD \cite{kunisch2001} to obtain a ROM. Finally, we apply well-established set-based reachability-analysis methods on the ROM to generate an over-approximation of the true reachable set for the original parametric PDE. Furthermore, we propose a set of assumptions on the reaction term, the initial condition, and the class of parametric uncertainties, under  which, we are able to explicitly estimate the margin between the calculated reachable set and the true one. In particular, thanks to \cite{thomee2006}, we are able to upperbound the norm of the error between trajectories of the original PDE and the FEM-based model. Furthermore, to upperbound the norm of the error between trajectories of the FEM-based model and the ROM, we build upon existing POD-based results \cite{kunisch2001} to derive new error upperbounds that are uniform over the considered set of parameters. Finally, we illustrate the effectiveness of the approach 
on the Allen--Cahn equation and a
logistic growth PDE.

The remainder of this paper is organized as follows. Section~\ref{sec2} introduces the PDE model, the required assumptions, and the three-step reduction approach. Section~\ref{sec3} states and proves the main result. Section~\ref{sec4} presents the numerical illustrations. Section~\ref{sec5} concludes the paper.

\textbf{Notation.} On $\R^n$, $\|\cdot\|_{\R^n}$ is the Euclidean norm and $\mathbb{B}(0,\eta):=\{z:\|z\|_{\R^n}\leq\eta\}$. On $\Omega\subset\R$, $(u,v):=\int_\Omega u\,v\,\dd x$, with norm $\|\cdot\|_{L^2}$; $\|\cdot\|_{L^\infty}$ is the essential-supremum norm; $H^k$ is the Sobolev space with seminorm $|v|_{H^k}:=\|\partial_x^k v\|_{L^2}$; $C^k$, $C^{k,1}$, $C^{k,l}$ denote the corresponding spaces of continuously differentiable, Lipschitz $k$-th derivative, and mixed regularity functions. For symmetric positive-definite $M$, $\|v\|_M:=(v^\top Mv)^{1/2}$; $I_r$ is the $r\times r$ identity; $d_H(A,B):=\sup_{b\in B}\inf_{a\in A}\|b-a\|$ for $A\subseteq B$ is the Hausdorff distance; $\mathrm{diam}(P):=\sup_{p,q\in P}\|p-q\|$.
\vspace{-4pt}
\section{Reduction of Reaction-Diffusion Systems} \label{sec2}
We consider the parametric reaction-diffusion PDE defined on the
spatial domain $\Omega=(0,L)$
over the time horizon $[0,T]$:
\begin{empheq}[left=\Sigma:\ \left\{,right=\right.]{align}
  &\partial_t u = d(p)\,\partial_{xx}u + f(u;p),
    &\text{in } &\Omega\times(0,T], \label{eq1}\\
  &\partial_x u = 0,
    &\text{on } &\partial\Omega\times(0,T], \label{eq2}\\
  &u = u_0(x;p),
    &\text{on } &\Omega\times\{0\}, \label{eq3}
\end{empheq}
where $u:\overline{\Omega}\times[0,T]\to\R$ is the state, $p \in P \in\R^{n_p}$, for some $n_p\geq 1$, is a constant but not precisely known parameter, $d : P \rightarrow \mathbb{R}_{\geq 0}$
is the diffusion coefficient, $f:\R\times P\to\R$ is the reaction
term, and $u_0(\cdot;p) \in C^2(\overline{\Omega})$ is the initial
condition such that $u_0(x;p)\geq 0$ for all $(x,p) \in\overline{\Omega} \times P$.

Equation~\eqref{eq2} imposes
homogeneous Neumann boundary conditions. Furthermore, solutions are understood in the classical sense, i.e., a solution
$u$ is in $C^{2,1}(\Omega\times(0,T])\cap C(\overline{\Omega}\times[0,T])$ and 
satisfies \eqref{eq1}--\eqref{eq3} pointwisely.

\begin{Assumption}\label{ass1}
There exist $\dmin, \dmax > 0$ such that 
\begin{align} \label{eqdass}
\dmin \leq d(p)\leq \dmax \qquad 
\forall p\in P.
\end{align}
The map $f$ is continuous and the gradient $\partial_u f$ is 
continuous on $[0,M] \times P$, for some $M > 0$, such that
\begin{equation}\label{eq4}
  f(0;p)\geq 0 \quad\text{and}\quad f(M;p)\leq 0
  \qquad\forall p\in P.
\end{equation}
Finally, we assume that $\sup_{p \in P} \norm{u_0(\cdot;p)}_{L^\infty}\leq M$.
\end{Assumption}

\begin{Remark}
Conditions~\eqref{eqdass}-\eqref{eq4} ensure positive invariance of the set $[0,M]$ for $\Sigma$; see~\cite[p.~199]{pao1992}. That is, given $p \in P$ and $ \norm{u_0(\cdot;p)}_{L^\infty}\leq M$, a unique global classical solution to $\Sigma$ exists and verifies 
$\norm{u(\cdot,\cdot;p)}_{L^\infty}\leq M$. Condition \eqref{eq4} is  satisfied for stable linear reactions. It is also satisfied for Allen--Cahn,
logistic, Fisher--KPP, Nagumo, and Hill-type reactions under
suitable parameter ranges.

\end{Remark}
Under Assumption~\ref{ass1}, 
for $L_f := \sup_{[0,M]\times P}|\partial_u f(u,p)|$
and $\mu: =\sup_{[0,M]\times P}\partial_u f(u;p)$, we conclude that, for all $a,b\in[0,M]$ and $p\in P$,
\begin{align}
  |f(a;p)-f(b;p)|&\leq L_f|a-b|, \label{eq5}\\
  (f(a;p)-f(b;p))(a-b)&\leq \mu|a-b|^2. \label{eq6}
\end{align}

Since $\Sigma$ is infinite-dimensional, set-based reachability tools cannot be applied directly. We proceed in three steps: FEM discretises $\Sigma$ into a finite-dimensional ODE system, POD reduces its dimension to a tractable ROM, and set-based reachability methods are applied to the ROM. The approximation errors at each step are combined to bound the gap to the true reachable set of $\Sigma$.
\subsection{Step~1: The FEM-Based Model}
FEMs~\cite{thomee2006} discretize the domain $\Omega$ using a uniform mesh with $\nh > 0$ nodes and mesh size $h := L/(\nh-1)$. The solution $u$ to $\Sigma$ is approximated by the combination
\begin{equation}\label{eq9}
  u_h(x,t;p) := \sum_{i=1}^{\nh} a_i(t;p)\,\varphi_i(x),
\end{equation}
where $\varphi_i$, $i \in \{1,...,\nh\}$, is the hat function that equals one at node $x_i$ and is equal to zero at all other nodes. 

The standard Galerkin projection yields the following ODE governing the vector $a(t;p) \in\R^{\nh}$ of coefficients of $u_h$.
\begin{equation}\label{eq10}
   M \dot{a} = -d(p) K a + F(a;p),
  ~~ a(0;p) = a_0(p),
\end{equation}
where $M,K\in\R^{\nh\times\nh}$ verify
$$M_{ij} := \int_\Omega\varphi_i(x)\,\varphi_j(x)\,\dd x, ~~ K_{ij} := \int_\Omega \partial_x \varphi_{i}(x)\, \partial_x \varphi_j(x)\,\dd x, $$ and $F_i(a;p) := \int_\Omega f(u_h(x,t;p);p)\,\varphi_i(x)\,\dd x$. 

The following result provides an upper bound on the mismatch between the true solution $u$ to $\Sigma$ and its 
FEM-based approximation.

\begin{lem}[{\cite[Ch.~14, Th.~14.1]{thomee2006}}]
\label{prop1}
Consider $\Sigma$ such that Assumption~\ref{ass1} holds. 
Let $u_h(x,t;p)$ be the FEM approximation defined by~\eqref{eq9}, where the coefficient vector $a(t;p)$ is the solution of~\eqref{eq10} with initial condition satisfying
\begin{equation}
\label{InitCd}
\begin{aligned}
a_0(p) & := M^{-1} b(p) \qquad \forall p \in P, 
\\
b_i(p) & := \int_\Omega u_0(x;p)\,\varphi_i(x)\,\dd x \qquad  \forall i \in \{1,...,n_h\}. 
\end{aligned}
\end{equation}
Then, for all $(t,p) \in [0,T] \times P$,
\begin{equation}\label{eq17}
  \|u(\cdot,t;p)-u_h(\cdot,t;p)\|_{L^2(\Omega)}
  \leq \varepsilon_h, 
\end{equation}
where $\varepsilon_h>0$ depends on $T$, $\dmin$, $\dmax$, $L_f$, $h$, and $M$. 
\end{lem}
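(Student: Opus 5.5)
The plan is to follow the classical semilinear parabolic Galerkin error analysis of \cite[Ch.~14]{thomee2006}. We introduce the Ritz (elliptic) projection onto the hat-function space $S_h$ and split the error, while tracking the constants so that they depend only on $\dmin,\dmax,L_f,T,h$ and the a priori bound $M$, and never on $p$ itself. For the Neumann problem we will use the shifted Ritz projection $R_h:H^1\to S_h$, defined by
\[
(\partial_x R_h v,\partial_x\chi)+(R_h v,\chi)=(\partial_x v,\partial_x\chi)+(v,\chi)\qquad \forall \chi\in S_h .
\]
This projection is independent of $d(p)$, so its approximation estimates $\|v-R_hv\|_{L^2}\le Ch^2|v|_{H^2}$ carry no parameter dependence. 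We then write
\[
u-u_h=(u-R_hu)+(R_hu-u_h)=:\rho+\theta .
\]

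\textbf{Step 1: bound on $\rho$.} Standard estimates give $\|\rho(t)\|_{L^2}\le Ch^2\|u(t)\|_{H^2}$ and $\|\rho_t\|_{L^2}\le Ch^2\|u_t\|_{H^2}$. We therefore need bounds on $\|u\|_{H^2}$ and $\|u_t\|_{H^2}$ on $[0,T]$ that are uniform in $p$. For these we use the invariance of $[0,M]$ (Remark after Assumption~\ref{ass1}). It gives $\|u\|_{L^\infty}\le M$, so the reaction satisfies $|f(u;p)|\le |f(0;p)|+L_fM$ and is globally Lipschitz with constant $L_f$ along the solution. Parabolic energy estimates for \eqref{eq1} then use $\dmin\le d(p)\le\dmax$ together with the $C^2$ regularity of $u_0$ and its compatibility with the Neumann condition, and produce the required bounds with constants depending on $\dmin,\dmax,L_f,M,T$. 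For uniformity in $p$ we also need $\sup_p\|u_0(\cdot;p)\|_{H^2}$ to be finite, and we will use it implicitly in $\varepsilon_h$. \textbf{This regularity step is the main obstacle.} It is the only place where we must check carefully that all constants are uniform over $P$. My first attempt will be to differentiate \eqref{eq1} in time, test with $u_t$ and $-\partial_{xx}u_t$, and apply Gronwall's inequality with rate governed by $L_f$. The term $\partial_u f\, u_t$ is handled by the continuity of $\partial_u f$ on the compact set $[0,M]\times P$, assuming $P$ is compact.

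\textbf{Step 2: error equation for $\theta$.} By the definition of $R_h$ and the Galerkin equation \eqref{eq10}, $\theta$ satisfies
\[
(\theta_t,\chi)+d(p)(\partial_x\theta,\partial_x\chi)=(f(u_h;p)-f(u;p),\chi)-(\rho_t,\chi)+d(p)(\rho,\chi)
\]
for all $\chi\in S_h$. Choosing $\chi=\theta$ and dropping the nonnegative diffusion term, we obtain
\[
\tfrac12\tfrac{d}{dt}\|\theta\|^2\le \big(L_f(\|\theta\|+\|\rho\|)+\|\rho_t\|+\dmax\|\rho\|\big)\|\theta\| .
\]
The Lipschitz estimate \eqref{eq5} requires the values $u_h$ to lie in $[0,M]$, which is not automatic for the Galerkin solution. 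I would handle this by replacing $f$ with its Lipschitz extension outside $[0,M]$, for instance through $f(\min(\max(u,0),M);p)$, which leaves $u$ unchanged. This is the standard device in Thom\'ee's Theorem~14.1, where local boundedness of $u_h$ is obtained by an inverse-estimate and continuation argument.

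\textbf{Step 3: Gronwall and the initial error.} Gronwall's inequality gives
\[
\|\theta(t)\|\le e^{L_fT}\Big(\|\theta(0)\|+C\!\int_0^T(\|\rho\|+\|\rho_t\|)\Big).
\]
The initial data $a_0(p)=M^{-1}b(p)$ make $u_h(0)$ the $L^2$-projection of $u_0$. Hence $\|\theta(0)\|\le \|u_0-P_hu_0\|+\|\rho(0)\|\le Ch^2\|u_0\|_{H^2}$. Combining the three steps yields \eqref{eq17} with
\[
\varepsilon_h=C(\dmin,\dmax,L_f,M,T)\,h^2 ,
\]
and this constant is independent of $(t,p)$.
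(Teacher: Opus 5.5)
Your architecture (the $d$-independent shifted Ritz projection, the split $u-u_h=\rho+\theta$, an energy estimate for $\theta$, Gronwall, and a cutoff of $f$) is the argument behind Thom\'ee's Theorem~14.1, which the paper only cites. Your adaptation to Neumann conditions is sensible. However, two steps do not close as written.

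The first is the cutoff. Clipping $f$ at exactly $[0,M]$ does not work. Your Gronwall estimate then controls the Galerkin solution $\tilde u_h$ of the clipped problem, not the $u_h$ of \eqref{eq10}. To identify the two you need $\tilde u_h\in[0,M]$ pointwise. The inverse estimate $\|\chi\|_{L^\infty}\le Ch^{-1/2}\|\chi\|_{L^2}$ on $X_h$, combined with your $L^2$ bound, only yields $\|\tilde u_h-u\|_{L^\infty}\le Ch^{3/2}$, i.e.\ $\tilde u_h$ lies in an $O(h^{3/2})$-neighbourhood of $[0,M]$. This is not an artefact: the consistent-mass scheme is not positivity preserving. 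In the logistic example, $u_0=p_2\sin^2(\pi x/2)$ touches $0$ at $x=0$, and the $L^2$-projection $u_h(\cdot,0)=P_hu_0$ already has a negative nodal value of order $h^2$ there. So the continuation argument stalls at $t=0$. The standard repair is to modify $f$ only outside $[-\delta,M+\delta]$. This has three consequences:
\begin{itemize}
\item it requires $f(\cdot;p)$ to be Lipschitz, uniformly in $p$, on that larger interval, whereas Assumption~\ref{ass1} only controls $\partial_u f$ on $[0,M]$;
\item the result then holds only for $h\le h_0$ with $Ch_0^{3/2}<\delta$;
\item $L_f$ is replaced by the Lipschitz constant on $[-\delta,M+\delta]$.
\end{itemize}
This continuation argument is also what guarantees that \eqref{eq10} has a solution on all of $[0,T]$.

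The second is regularity: Step~1 needs more than Assumption~\ref{ass1}. Uniform $H^2$ bounds on $u(t)$ down to $t=0$ require $\partial_x u_0=0$ on $\partial\Omega$, which Assumption~\ref{ass1} does not impose. Testing the time-differentiated equation with $-\partial_{xx}u_t$ requires $\|\partial_x u_t(\cdot,0)\|_{L^2}=\|d(p)\,\partial_x^3u_0+\partial_uf(u_0;p)\,\partial_xu_0\|_{L^2}<\infty$, i.e.\ $u_0\in H^3$. With only $C^2$ (hence $H^2$) data, parabolic smoothing gives merely $\|u_t(t)\|_{H^2}\lesssim t^{-1}$, which is not integrable. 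So the right-hand side of $\int_0^T\|\rho_t\|_{L^2}\,\dd t\le Ch^2\int_0^T\|u_t\|_{H^2}\,\dd t$ is not controlled by your estimates. You would need Thom\'ee's nonsmooth-data analysis, which exploits $u_h(0)=P_hu_0$, to keep the rate $h^2$, or else accept $h^2\log(1/h)$. Hence $\varepsilon_h=C(\dmin,\dmax,L_f,M,T)\,h^2$ is not what your argument delivers. The constant also depends on $\sup_{p}\|u_0(\cdot;p)\|_{H^3}$, on $L$, and on the Lipschitz constant of $f$ near $[0,M]$. These quantities are finite under Assumption~\ref{ass2} when the $\xi_l$ are bounded. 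The lemma's own list of dependencies is incomplete in the same way, since Thom\'ee's constant depends on norms of the exact solution. So your remark about $\sup_p\|u_0\|_{H^2}$ points in the right direction, but the hypotheses you actually use (compatibility, $H^3$ data, Lipschitz $f$ on a neighbourhood of $[0,M]$, small $h$) must be stated explicitly.
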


For the FEM-based model in~\eqref{eq10} to be accurate, a large $\nh$ is usually required, which makes the reachable-set computation over the set of parameters $P$ intractable. Hence, the following subsection provides a 
POD-based approach to reduce the dimension of the obtained FEM-based model. 

\subsection{Step~2: ROM via POD}
POD-based approch in\cite{kunisch2001}
suggest solving~\eqref{eq10} for parameter samples
$\{p_1,\ldots,p_{n_p}\}\subset P$ at time instances
$\{t_1,\ldots,t_{n_t}\}\subset[0,T]$, with a constant time step $\tau := T/(n_t - 1)$.  

The collected
snapshots $a(t_k;p_j)\in\R^{\nh}$ and 
the first-order difference quotients
\begin{equation}\label{eq20}
  \overline\partial a(t_k;p_j) :=
  \frac{a(t_{k+1};p_j)-a(t_k;p_j)}{t_{k+1}-t_k} \in\R^{\nh}
\end{equation}
are gathered in the matrix 
$S\in\R^{\nh n_t \times 2 n_p}$ so that
\begin{equation}
\label{eqSSS}
\begin{aligned}
S  & := [S_1~...~S_{2 n_p}] \quad \text{and}\quad \forall i \in \{1,...,n_p \}:
\\
S_i & := [a(t_1;p_i)^\top ~ ... ~ a(t_{n_t};p_i)^\top ]^\top,
\\ 
S_{n_p + i} & := [1_{n_h}^\top ~ \bar{\partial }a(t_1;p_i)^\top ~ ... ~ \bar{\partial } a(t_{n_t-1};p_i)^\top ]^\top.
\end{aligned}
\end{equation}

The singular-value
decomposition of $S$ produces $\sigma_1\geq\cdots\geq\sigma_{2 n_p}\geq 0$.
Furthermore, a POD basis $V\in\R^{\nh \times r}$ is formed by the first $r$
left singular vectors of $S$, orthonormalised with respect to the mass
matrix $M$ so that
\begin{equation}\label{eq11}
  V^\top M V = I_r, \quad\text{hence}\quad
  \|Vc\|_{M} = \|c\|_{\R^r}\;\;\forall\,c\in\R^r.
\end{equation}

Substituting $a=Vc$ into~\eqref{eq10} and projecting onto the
column space of $V$ produces the ROM
\begin{equation}\label{eq13}
  \frac{d c}{dt}=g(c;p),
  \qquad c(0;p)=V^\top a_0(p),
\end{equation}
with state $c(t;p)\in\R^r$ and reduced right-hand side
\begin{equation}\label{eq14}
  g(c;p):=V^\top M^{-1}\bigl(-d(p)\,K\,Vc+F(Vc;p)\bigr).
\end{equation}
The corresponding output is 
\begin{equation}\label{eq15}
  u_r(x,t;p):=\sum_{i=1}^{\nh}(Vc(t;p))_i\,\varphi_i(x).
\end{equation}

At this point, we recall the following result showing optimality of the POD basis.
\begin{lem}[{\cite[Th.~2.1]{kunisch2001}}]
\label{prop2}
 Let $a(t_k;p_j) \subset \R^{\nh}$, $k,j \in \{1,...,n_t\} \times \{1,...,n_p \}$, be the snapshots of the FEM-based model in~\eqref{eq10} and let the corresponding matrix $S$ in \eqref{eqSSS} and the resulting POD basis $V \in \R^{\nh \times r}$. Then, for any matrix $W \in \R^{\nh \times r}$ with $W^\top M W = I_r$,
\begin{equation}
\label{eq19}
\begin{aligned}
 \sum_{i=r+1}^{n_s}\sigma_i^2 & = \sum_{k,j}
  \|a(t_k;p_j) - VV^\top a(t_k;p_j)\|_M^2
  \\ &
  \leq
  \sum_{k,j}
  \|a(t_k;p_j) - WW^\top a(t_k;p_j)\|_M^2.
\end{aligned}
\end{equation}
\end{lem}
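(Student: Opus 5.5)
This is the classical POD optimality statement, recast in the $M$-inner product, so I would reduce it to the Eckart--Young--Mirsky theorem through a Cholesky change of variables. Write $M = R^\top R$ with $R$ invertible; $M$ is the symmetric positive-definite mass matrix of the hat functions. Set $\tilde a := R a$ for every snapshot, so that $\|a\|_M = \|\tilde a\|_{\R^{\nh}}$. For any $W$ with $W^\top M W = I_r$, the matrix $\tilde W := R W$ has orthonormal columns in the Euclidean sense. I would read the projection $WW^\top a$ in the statement as the $M$-orthogonal projection $WW^\top M a$ (the mass matrix has to appear for the identity to hold); the plan is stated with that convention. Then
\[
R\,(a - WW^\top M a) = \tilde a - \tilde W \tilde W^\top \tilde a ,
\]
so every term in the sums of \eqref{eq19} becomes a Euclidean projection residual $\|\tilde a - \tilde W\tilde W^\top \tilde a\|_{\R^{\nh}}^2$.

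Next, collect the transformed snapshots $\tilde a(t_k;p_j)$ as the columns of a matrix $\tilde Y \in \R^{\nh \times n_s}$. The sum of residuals is then $\|\tilde Y - \tilde W \tilde W^\top \tilde Y\|_F^2$. Because $\tilde W\tilde W^\top$ is an orthogonal projector, Pythagoras gives
\[
\|\tilde Y - \tilde W \tilde W^\top \tilde Y\|_F^2 = \|\tilde Y\|_F^2 - \operatorname{tr}\bigl(\tilde W^\top \tilde Y\tilde Y^\top \tilde W\bigr).
\]
Minimizing the residual is therefore the same as maximizing the trace over Stiefel matrices $\tilde W$. By Ky Fan's maximum principle, equivalently Eckart--Young--Mirsky, the maximum equals $\sum_{i\le r}\sigma_i^2$. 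It is attained by the first $r$ left singular vectors of $\tilde Y$. Since $\|\tilde Y\|_F^2=\sum_i\sigma_i^2$, the minimal residual is $\sum_{i>r}\sigma_i^2$. This gives both the equality and the inequality in \eqref{eq19}, with $V := R^{-1}\tilde V$ satisfying $V^\top M V = I_r$.

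The main obstacle is matching this to the paper's construction, which I would handle in two places. First, the $\sigma_i$ must be the singular values of the $M$-weighted snapshot matrix $\tilde Y = R\,Y$. This is not the same as taking the SVD of $S$ and orthonormalizing afterwards, which in general does not give the optimal basis. Second, the stacked matrix $S$ in \eqref{eqSSS} arranges time along rows, while the snapshots and difference quotients must serve as columns indexed by $(k,j)$. I would therefore state the lemma for the matrix whose columns are $R\,a(t_k;p_j)$, and note that $n_s$ is its column count. If the difference quotients are included, the same argument applies verbatim and the sums in \eqref{eq19} also run over those columns. In either case I would invoke \cite[Th.~2.1]{kunisch2001} for the weighted version directly.
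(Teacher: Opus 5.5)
Your proof is correct, and it supplies an argument the paper does not: the paper gives no proof of Lemma~\ref{prop2} and simply defers to Kunisch--Volkwein. They state POD optimality in a general Hilbert space $X$, allowing weights and difference quotients among the snapshots. They characterize the optimal basis through the eigenvalue problem of the correlation operator, i.e., the method of snapshots with the Gram matrix $Y^\top M Y$, and read off the error identity from its eigenvalues. Your route factors $M=R^\top R$, maps $W\mapsto RW$ onto the Stiefel manifold, and applies Ky Fan or Eckart--Young--Mirsky to $RY$; it is the finite-dimensional linear-algebra version of the same fact. It is shorter and self-contained, and it makes explicit which decomposition must define $V$ and the $\sigma_i$. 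The Hilbert-space formulation buys generality (infinite-dimensional snapshot sets, weighted sums approximating time integrals) that this lemma does not need.

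Your corrections to the statement are necessary, not cosmetic. With $V^\top M V=I_r$, the matrix $VV^\top$ is not the $M$-orthogonal projector. For $r=\nh$ one gets $VV^\top=M^{-1}$, so the middle term of \eqref{eq19} is $\sum\|(I-M^{-1})a\|_M^2>0$ while the tail sum vanishes. The paper's later use in \eqref{eqKV}, where $P_r$ is the $L^2$-orthogonal projector, confirms your reading $VV^\top M$. Likewise, the left singular vectors of $S$ in \eqref{eqSSS} live in $\R^{\nh n_t}$, not $\R^{\nh}$. Even with snapshots arranged as columns, a Euclidean SVD followed by $M$-orthonormalization returns the Euclidean-optimal subspace and Euclidean singular values. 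Since $\lambda_{\min}(M)$ and $\lambda_{\max}(M)$ are both of order $h$, the equality in \eqref{eq19} would then be off by roughly that factor. Only the one-sided bound $\sum\|a-VV^\top Ma\|_M^2\le\lambda_{\max}(M)\sum_{i>r}\sigma_i^2$ survives, which is still enough for \eqref{eqKV}. Two bookkeeping points remain. Set $\sigma_i:=0$ for $i>\min(\nh,n_s)$, since $n_s\gg\nh$ in Section~\ref{sec4}. Also drop the $1_{\nh}$ padding block of \eqref{eqSSS} instead of carrying it along as a spurious snapshot column.
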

Lemma~\ref{prop2} shows that $V$ represents the snapshot data better then any other basis choice. 
Furthermore, since the matrix $S$ includes the derivatives in \eqref{eq20}, we are able to show that  
$$ \sup_{p\in P} \left\{ \sum_{k}
  \|a(t_k;p) - VV^\top a(t_k;p)\|_M^2 \right\} < +\infty. $$ 
The latter will play a key role in the proof of   Proposition~\ref{prop3} to establish the existence of $\varepsilon_r > 0$ such that 
\begin{equation} \label{eqADDEd}
  \sup_{p\in P}\sup_{t\in[0,T]}
  \|u_h(\cdot,t;p)-u_r(\cdot,t;p)\|_{L^2}\leq\varepsilon_r. 
\end{equation}
\vspace*{-8pt}
\subsection{Step~3: Reachability Analysis on the ROM}
The reachable set of $\Sigma$ at time $t\in[0,T]$ is
\begin{equation}\label{reach_set}
\Rcal(t):=\{u(\cdot,t;p):p\in P\}.  
\end{equation}

The ROM in~\eqref{eq13}
still depends on $p\in P$ and generates the reduced reachable
set 
\begin{align} \label{eqRr} 
\Rr(t):=\{c(t;p):p\in P\}\subset\R^r.
\end{align}
At this point, we can use existing set-based
reachability-analysis methods, based on  \cite{girard2005}  or \cite{althoff2010}, to compute a set
$\Ccal(t)\subset\R^r$ guaranteed to contain $\Rr(t)$. We can also, under further conditions, guarantee the existence of $\eta>0$ such that $d_H(\Rr(t),\Ccal(t)) \leq \eta$.

\begin{Remark}\label{rem1}
 Note that the $\eta$ mismatch between the true reachable set $\mathcal{R}_r(t)$ and its approximation $\mathcal{C}(t)$ 
is usually achieved under specific structural properties of $g$ and the parameter set $P$. For example, when $g$ is monotone and $P$ is an
interval~\cite{meyer2018}, (b) holds with $\eta=0$. When
$g$ is linear with $P$ represented by support
functions~\cite{le2010reachability}  or when $g$ is nonlinear
and $P$ is compact and convex~\cite{rungger2018accurate} any prescribed $\eta>0$ can be achieved. 
When $g$ is a general Lipschitz function, CORA~\cite{althoff2010}
achieves (b) with $\eta$ dependent on a discretization step size and
$\mathrm{diam}(P)$.
\end{Remark}

\section{Main Result} \label{sec3}

In this section, we seek a computable set 
$\Rhat(t)$ that contains $\Rcal(t)$ such that
$\dH(\Rhat(t),\Rcal(t))\leq\varphi(\varepsilon_h,\varepsilon_r,\eta)$, for a computable $\varphi$.
To this end, we propose an original result that computes $\varepsilon_r$, for which \eqref{eqADDEd} holds, using the following assumption.   
\vspace*{-5pt}
\begin{Assumption}\label{ass2}
There exists a positive integer $Q$ such that, for every $l \in \{1,2,...,Q \}$, there exist
$f_l \in C^{1,1}([0,M])$ and 
$u^l_0 \in C^3(\overline{\Omega})$ with $\partial_x u^l_0|_{\partial\Omega}=0$,   there exist Lipschitz functions
$\theta_l, \xi_l : P \to \R$ with Lipschitz  constants
$L_\theta, L_\xi$, respectively, 
such that, for all
$p \in P$,
\begin{equation}\label{eq8}
\begin{aligned}
f(u;p) = \sum^{Q}_{l=1} \theta_l(p)\,f_l(u), \quad 
u_0^l(\cdot;p) = \xi_l(p)\,u^l_0(\cdot).
\end{aligned}
\end{equation}
Finally, $d$ is Lipschitz with Lipschitz constant $L_d$.
\end{Assumption}
\vspace{-4pt}
Next, we introduce the Ritz projector $R_r: X_h \to X_r$, where $X_h := \mathrm{span}\{\varphi_1, \ldots, \varphi_{n_h}\}$ is the FEM space generated by the hat functions in~\eqref{eq9}, and $X_r$ is the $r$-dimensional subspace of $X_h$ generated by the columns of $V$ in~\eqref{eq11}, i.e.,
\begin{equation*}
X_r := \Big\{\sum_{i=1}^{n_h}(Vc)_i\, \varphi_i \;:\; c \in \mathbb{R}^r\Big\}.
\end{equation*}
The projector $R_r$ is defined by the orthogonality condition
\begin{equation}\label{eq16}
  (\partial_x(w - R_r w), \partial_x v_r) = 0 \quad \forall v_r \in X_r.
\end{equation}
% As a The following proposition provides an upperbound on the error between $u_h$ in~\eqref{eq9} and $u_r$ in~\eqref{eq15}: first for each $p \in P$ under Assumption~\ref{ass1}, then in the sense of~\eqref{eqADDEd} under Assumptions~\ref{ass1} and~\ref{ass2}.
\vspace{-20pt}
\begin{prop} \label{prop3}
Let the FEM-based model in~\eqref{eq10} generate the output $u_h$ given by \eqref{eq9} and let the corresponding ROM in~\eqref{eq13} generate the output $u_r$ given by~\eqref{eq15}. Under  Assumption \ref{ass1} and for initial conditions verifying 
\begin{align} \label{initCd1}
u_r(\cdot,0;p) = R_r u_h(\cdot,0;p), \qquad  p \in P,
\end{align} 
where $R_r$ is the Ritz projector verifying~\eqref{eq16}, 
we have 
\begin{equation}\label{eq23}
  \|u_h(\cdot,t;p) - u_r(\cdot,t;p)\|_{L^2} \leq \varepsilon_r(p) \quad \forall t \in [0,T],
\end{equation}
where
\begin{align*}
  \varepsilon_r(p)^2 := C_0 E(p), ~~ 
  C_0  := 2 + 2 e^{(2\mu + 1)T}(1 + L_f^2 T),
\end{align*}
\begin{multline}\label{eq22}
  E(p) := \sup_{t\in[0,T]}\|u_h(\cdot,t;p) - R_r u_h(\cdot,t;p)\|_{L^2}^2 \\
  + \int_0^T \|\partial_t(u_h - R_r u_h)(\cdot,t;p)\|_{L^2}^2\, \dd t.
\end{multline}
When Assumption \ref{ass2} additionally holds, then there exist  $C_1, C_2 > 0$ (depending on $T$, $\dmin$, $\dmax$, $M$, $L_f$, $\mu$, $L$, $L_d$, $L_\theta$, and $L_\xi$) such that, when $\tau$ is sufficiently small, we conclude, for 
$ \rho_P := \sup_{p\in P}\min_{1\leq j\leq n_p}\|p-p_j\|_{\R^{n_p}} $, that 
\begin{equation}\label{eq25}
  \sup_{p \in P} E(p) \leq C_1 \sum_{i=r+1}^{n_s}\sigma_i^2 + C_2 \rho_P.
\end{equation} 
Hence, \eqref{eqADDEd} holds for
  $\varepsilon_r^2 :=  C_0\Bigl[C_1 \sum_{i=r+1}^{n_s}\sigma_i^2 + C_2 \rho_P\Bigr]$.
\end{prop}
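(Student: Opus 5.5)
The first claim will follow from the classical Ritz-projection splitting used for Galerkin error estimates. Write
\[
u_h - u_r = (u_h - R_r u_h) + (R_r u_h - u_r) =: \rho + \vartheta ,
\]
with $\rho(t)\in X_h$ and $\vartheta(t)\in X_r$. The initial-condition choice \eqref{initCd1} gives $\vartheta(0)=0$. Both $u_h$ and $u_r$ satisfy Galerkin identities: $u_h$ against all $v\in X_h$, and $u_r$ against all $v_r\in X_r$ (the ROM \eqref{eq13} is the $M$-orthogonal projection of \eqref{eq10}, since $V^\top M V=I_r$). Subtracting them and testing with $v_r=\vartheta$, the defining property \eqref{eq16} of $R_r$ cancels the diffusion term of $\rho$. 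We obtain
\[
(\partial_t\vartheta,\vartheta) + d(p)\|\partial_x\vartheta\|_{L^2}^2 = -(\partial_t\rho,\vartheta) + (f(u_h;p)-f(u_r;p),\vartheta).
\]
Since $d(p)\geq \dmin>0$, the diffusion term is nonnegative and can be dropped. For the reaction term, split $f(u_h)-f(u_r)=[f(u_h)-f(R_ru_h)]+[f(R_ru_h)-f(u_r)]$. Apply \eqref{eq6} to the second bracket, which gives at most $\mu\|\vartheta\|^2$. Apply \eqref{eq5} and Young's inequality to the first, giving at most $\tfrac12 L_f^2\|\rho\|^2+\tfrac12\|\vartheta\|^2$. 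Young's inequality also gives $|(\partial_t\rho,\vartheta)|\le \tfrac12\|\partial_t\rho\|^2+\tfrac12\|\vartheta\|^2$.

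This yields
\[
\tfrac{d}{dt}\|\vartheta\|^2\le(2\mu+1)\|\vartheta\|^2+\|\partial_t\rho\|^2+L_f^2\|\rho\|^2 ,
\]
up to bookkeeping of the constants. Gronwall's inequality with $\vartheta(0)=0$ then gives
\[
\|\vartheta(t)\|^2\le e^{(2\mu+1)T}\Bigl(\int_0^T\|\partial_t\rho\|^2+L_f^2T\sup_t\|\rho\|^2\Bigr)\le e^{(2\mu+1)T}(1+L_f^2T)E(p).
\]
Finally $\|u_h-u_r\|^2\le2\|\rho\|^2+2\|\vartheta\|^2\le C_0E(p)$, which is \eqref{eq23}.

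One caveat must be handled: \eqref{eq5}--\eqref{eq6} are only valid on $[0,M]$. I would either assume, or justify by a discrete maximum principle for lumped hat functions, that $u_h,u_r$ stay in $[0,M]$. Otherwise I would extend $f$ outside $[0,M]$ by a globally Lipschitz extension with the same constants, which leaves the argument unchanged.

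For \eqref{eq25}, fix $p$ and let $p_j$ be the nearest sample, so $\|p-p_j\|\le\rho_P$. Since $I-R_r$ is linear, I would bound $E(p)\le 2E(p_j)+2\,\mathrm{(difference\ terms)}$. The difference terms are $\sup_t\|(I-R_r)(u_h(p)-u_h(p_j))\|^2$ and the analogous integral of time derivatives.

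\emph{Parameter-Lipschitz step.} Under Assumption~\ref{ass2}, $d$, $f(\cdot;p)=\sum\theta_l f_l$, and $u_0(\cdot;p)$ are Lipschitz in $p$. A Gronwall argument on \eqref{eq10}, and on its time derivative, bounds $\sup_t\|a(t;p)-a(t;p_j)\|_M$ and $\int_0^T\|\dot a(t;p)-\dot a(t;p_j)\|_M^2$ by a constant times $\|p-p_j\|$. Differentiating in time requires $f_l\in C^{1,1}$ and $u_0^l\in C^3$ with Neumann compatibility, which bounds $\dot a(0)$ uniformly. Stability of $I-R_r$ in the relevant norm would then bound the difference terms by $C\rho_P$. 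The first power of $\rho_P$ is consistent with \eqref{eq25}: when squaring, one can use boundedness of the trajectories (by $M$) to keep a linear factor.

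\emph{Sample step.} It remains to bound $E(p_j)$ by the POD tail. I would replace $\sup_t$ and $\int_0^T$ by the time grid $\{t_k\}$, with an interpolation error controlled by $\tau$ times higher time-derivative bounds; this is where "$\tau$ sufficiently small" enters. I would also replace $R_r$ by the $M$-orthogonal projection $VV^\top M$, as in \cite{kunisch2001}. Including the difference quotients \eqref{eq20} in $S$ makes $\sum_k\tau\|\bar\partial a(t_k;p_j)-VV^\top M\bar\partial a(t_k;p_j)\|_M^2$ part of the POD tail $\sum_{i>r}\sigma_i^2$ from Lemma~\ref{prop2}.

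I expect the main obstacle to be the passage from the $L^2$ projection error of the snapshots to the Ritz projection error. Controlling $\|(I-R_r)w\|$ by $\|(I-VV^\top M)w\|$ normally introduces $\|K\|$- or $h$-dependent factors, e.g. via an inverse inequality, and these must be absorbed into $C_1$. The uniformity of the parameter-Lipschitz bound for $\partial_t u_h$ is a second delicate point.
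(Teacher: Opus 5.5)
Your proposal is correct and follows essentially the same route as the paper: the Ritz splitting with $\phi(0)=0$ and Gronwall for \eqref{eq23}, then parameter-Lipschitz bounds on $u_h$ and $\partial_t u_h$ via the time-differentiated weak form, then time interpolation between snapshots under a $\tau$-smallness condition and the POD tail of \cite{kunisch2001}. The $R_r$-to-$P_r$ passage is handled as you anticipated, by absorbing an $h$- and $V$-dependent factor $(1+\lambda)^2$, $\lambda:=\sup_{v\in X_h\setminus\{0\}}\|R_rv\|_{L^2}/\|v\|_{L^2}$, via $v-R_rv=(I-R_r)(v-P_rv)$; the remaining differences are cosmetic: the paper proves $|E(p)-E(q)|\le C_2\|p-q\|$ instead of your $E(p)\le 2E(p_j)+O(\rho_P^2)$, and both your Young splits and the paper's \eqref{eq30} actually give exponent $2\mu+2$ rather than $2\mu+1$ in $C_0$.
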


In the following theorem, given $t \in [0,T]$, we build a set $\Rhat(t)$ approximating $\mathcal{R}(t)$ in \eqref{reach_set} by mapping the set $\Ccal(t)$, approximating the ROM reachable set $\mathcal{R}_r(t)$ in \eqref{eqRr}, back to $X_h := \mathrm{span}\{\varphi_1, \ldots, \varphi_{n_h}\}$ using $V$, and  
 enlarging by $\varepsilon_h+\varepsilon_r$ for the  constants $\varepsilon_h$ and $\varepsilon_r > 0$ introduced in  Lemma~\ref{prop1} and Proposition~\ref{prop3}, respectively. That is, we let $\Phi(\hat{c}) := \sum_{i=1}^{\nh}(V\hat{c})_i\,\varphi_i$ and 
\begin{multline}\label{eq37_1}
  \Rhat(t):=\bigl\{v\in L^2(\Omega):
  \exists\hat{c}\in\Ccal(t):\\
  \|v-\Phi(\hat{c})\|_{L^2}\leq
  \varepsilon_h+\varepsilon_r\bigr\}.
\end{multline}
\vspace{-20pt}
\begin{Theorem} \label{thm1}
Let $\Sigma$ verify Assumptions~\ref{ass1} and~\ref{ass2} and generate classical solutions $u(\cdot,\cdot;p)$ for all $p \in P$. Let the corresponding FEM-based model in~\eqref{eq10} generate the outputs $u_h(\cdot,\cdot;p)$ in~\eqref{eq9} for all $p \in P$ and for initial conditions verifying \eqref{InitCd}.  
Let the ROM in~\eqref{eq13} generate  the outputs $u_r(\cdot,\cdot;p)$ in~\eqref{eq15} for all $p \in P$ and for initial conditions verifying  \eqref{initCd1}. 
Let $\varepsilon_h, \varepsilon_r > 0$ such that~\eqref{eq17} and \eqref{eqADDEd} hold, respectively. 
Then, given $t \in [0,T]$ and an approximation $\Ccal(t)$ of the ROM reachable set $\mathcal{R}_r(t)$ in \eqref{eqRr}, the following hold:
\begin{enumerate}[(i)]
  \item $\Rr(t) \subset \Ccal(t)$ $\Rightarrow$  $\Rcal(t)\subseteq\Rhat(t)$;
  \item  $d_H(\Rr(t),\Ccal(t)) \leq \eta$ $\Rightarrow$  $\dH\!\bigl(\Rhat(t),\Rcal(t)\bigr) \leq 2\varepsilon_h+2\varepsilon_r+\eta$.
\end{enumerate}
\end{Theorem}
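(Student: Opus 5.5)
The plan is to derive both items from the triangle inequality in $L^2(\Omega)$, combined with one structural fact: the map $\Phi$ is an isometry from $(\R^r,\|\cdot\|_{\R^r})$ into $(X_h,\|\cdot\|_{L^2})$. I would establish this fact first. For any $a\in\R^{\nh}$, the definition of the mass matrix gives
\begin{equation*}
\Bigl\|\sum_{i=1}^{\nh} a_i\varphi_i\Bigr\|_{L^2}^2=\sum_{i,j}a_ia_j(\varphi_i,\varphi_j)=a^\top M a=\|a\|_M^2 .
\end{equation*}
Applying this with $a=V\hat c$ and using \eqref{eq11} yields $\|\Phi(\hat c)\|_{L^2}=\|V\hat c\|_M=\|\hat c\|_{\R^r}$. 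Since $\Phi$ is linear, this gives $\|\Phi(c_1)-\Phi(c_2)\|_{L^2}=\|c_1-c_2\|_{\R^r}$. I will also use that, by \eqref{eq15}, $u_r(\cdot,t;p)=\Phi(c(t;p))$.

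\textbf{Item (i).} Fix $p\in P$ and consider $u(\cdot,t;p)\in\Rcal(t)$. By \eqref{eqRr}, $c(t;p)\in\Rr(t)\subset\Ccal(t)$, so $\hat c:=c(t;p)$ is an admissible choice in \eqref{eq37_1}. The triangle inequality, together with Lemma~\ref{prop1} (the bound \eqref{eq17}) and Proposition~\ref{prop3} (the bound \eqref{eqADDEd}), gives
\begin{equation*}
\|u(\cdot,t;p)-\Phi(\hat c)\|_{L^2}\le\|u-u_h\|_{L^2}+\|u_h-u_r\|_{L^2}\le\varepsilon_h+\varepsilon_r ,
\end{equation*}
where all functions are evaluated at $(\cdot,t;p)$. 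Hence $u(\cdot,t;p)\in\Rhat(t)$. Both bounds hold uniformly in $p$, which is exactly why the uniform $\varepsilon_r$ from Proposition~\ref{prop3} is needed rather than $\varepsilon_r(p)$.

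\textbf{Item (ii).} With the paper's one-sided convention for $\dH$, I must bound $\sup_{v\in\Rhat(t)}\inf_{p\in P}\|v-u(\cdot,t;p)\|_{L^2}$. Fix $v\in\Rhat(t)$ and a witness $\hat c\in\Ccal(t)$ with $\|v-\Phi(\hat c)\|_{L^2}\le\varepsilon_h+\varepsilon_r$. The hypothesis $\dH(\Rr(t),\Ccal(t))\le\eta$ gives, for any $\delta>0$, some $p\in P$ with $\|\hat c-c(t;p)\|_{\R^r}\le\eta+\delta$. Then
\begin{equation*}
\|v-u(\cdot,t;p)\|_{L^2}\le\|v-\Phi(\hat c)\|_{L^2}+\|\Phi(\hat c)-\Phi(c(t;p))\|_{L^2}+\|u_r-u_h\|_{L^2}+\|u_h-u\|_{L^2}.
\end{equation*}
The four terms are bounded as follows:
\begin{enumerate}[(a)]
\item the first by $\varepsilon_h+\varepsilon_r$, by the choice of $\hat c$;
\item the second by $\eta+\delta$, by the isometry property;
\item the third by $\varepsilon_r$, by \eqref{eqADDEd};
\item the fourth by $\varepsilon_h$, by \eqref{eq17}.
\end{enumerate}
Taking the infimum over $p$, letting $\delta\to0$, and then taking the supremum over $v$ gives the bound $2\varepsilon_h+2\varepsilon_r+\eta$.

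The argument is short, and the only step needing care is the isometry $\|\Phi(c)\|_{L^2}=\|c\|_{\R^r}$, which is what allows the Euclidean error $\eta$ on the ROM level to transfer without any constant to an $L^2$ error. Everything else is bookkeeping with the one-sided Hausdorff distance: the supremum runs over the larger set, and the infimum need not be attained, which the $\delta$-argument handles.
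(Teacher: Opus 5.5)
Your proposal is correct and matches the paper's proof step for step. Item (i) uses the triangle inequality through $u_h$ and $u_r=\Phi(c(t;p))$, and item (ii) uses the chain $v\to\Phi(\hat c)\to\Phi(c(t;p))\to u(\cdot,t;p)$ together with the isometry $\|\Phi(c)\|_{L^2}=\|c\|_{\R^r}$. Your two refinements only make the same argument more rigorous: you derive that isometry from $M_{ij}=(\varphi_i,\varphi_j)$ and $V^\top MV=I_r$, and your $\delta$-argument avoids the paper's tacit assumption that some $c^*\in\Rr(t)$ attains $\|\hat c-c^*\|_{\R^r}\le\eta$.
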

\vspace{-18pt}
\begin{proof}
Fix $t \in [0,T]$ and $p \in P$, and write $u_r(\cdot, t; p) = \Phi(c(t; p))$ with $c(\cdot; p)$ the solution of~\eqref{eq13}. The triangle inequality gives
\begin{equation}
\label{eq38}
\begin{aligned} 
  \|u(\cdot,t;p) - \Phi(c(t;p))\|_{L^2}   & \leq 
  \\
 \|u(\cdot,t;p) - u_h(\cdot,t;p)\|_{L^2} + &  \|u_h(\cdot,t;p) - u_r(\cdot,t;p)\|_{L^2}.
\end{aligned}
\end{equation}
According to Lemma~\ref{prop1}, the first term on the right-hand side  of~\eqref{eq38} is bounded by $\varepsilon_h$, and by Proposition~\ref{prop3}, the second is bounded by $\varepsilon_r$, therefore
\begin{equation}\label{eq39}
  \|u(\cdot,t;p) - \Phi(c(t;p))\|_{L^2} \leq \varepsilon_h + \varepsilon_r.
\end{equation}

\begin{itemize}
\item  When $\Rr(t) \subset \Ccal(t)$, we conclude that $\Rr(t) \subseteq \Ccal(t)$ and; thus,  $c(t; p) \in \Ccal(t)$.
Hence, by~\eqref{eq39}, we conclude that  $u(\cdot, t; p) \in \Rhat(t)$ in~\eqref{eq37_1}.
\item When $d_H(\Rr(t),\Ccal(t)) \leq \eta$, we prove that 
\begin{multline*}
  \dH(\Rhat(t), \Rcal(t)) = \sup_{v \in \Rhat(t)} \inf_{p \in P} \|v - u(\cdot,t;p)\|_{L^2} \\
  \leq 2\varepsilon_h + 2\varepsilon_r + \eta.
\end{multline*}
For this, we pick $v \in \Rhat(t)$. By~\eqref{eq37_1}, there exists $\hat c \in \Ccal(t)$ such that
\begin{equation}\label{eq40}
  \|v - \Phi(\hat c)\|_{L^2} \leq \varepsilon_h + \varepsilon_r.
\end{equation}
Since $\hat c \in \Ccal(t)$ and $\dH(\Ccal(t), \Rr(t)) \leq \eta$, there must exist $c^* \in \Rr(t)$ such that $\|\hat c - c^*\|_{\R^r} \leq \eta$. By definition of $\Rr(t)$, we conclude the existence  of $p^* \in P$ with $c^* = c(t; p^*)$. By the definition of $\Phi$ and since  $\|\Phi(d)\|_{L^2} = \|d\|_{\R^r}$ for all $d \in \R^r$, we conclude that  
\begin{equation}\label{eq41}
  \|\Phi(\hat c) - \Phi(c^*)\|_{L^2} = \|\hat c - c^*\|_{\R^r} \leq \eta.
\end{equation}
Combining~\eqref{eq39}-\eqref{eq41}, we obtain 
\begin{multline*}
  \|v - u(\cdot,t;p^*)\|_{L^2} \leq \|v - \Phi(\hat c)\|_{L^2} \\
  + \|\Phi(\hat c) - \Phi(c^*)\|_{L^2} + \|\Phi(c^*) - u(\cdot,t;p^*)\|_{L^2} \\
  \leq (\varepsilon_h + \varepsilon_r) + \eta + (\varepsilon_h + \varepsilon_r) = 2\varepsilon_h + 2\varepsilon_r + \eta.
\end{multline*}The proof is completed
since $v \in \Rhat(t)$ is arbitrary.  
\end{itemize}

\end{proof}

\vspace{-8pt}\subsection{Proof of Proposition~\ref{prop3}}
\subsubsection{The FEM-ROM error bound at each $p \in P$}~\\[2pt]

Given $p \in P$,   testing~\eqref{eq10} with $v_h \in X_h$ and integrating by parts,  we obtain,  for all $v_h \in X_h$,
\begin{equation}\label{eqWF_FEM}
  (\partial_t u_h, v_h) + d(p) (\partial_x u_h, \partial_x v_h) = (f(u_h; p), v_h).
\end{equation}
The ROM output $u_r$ in~\eqref{eq15}, being a linear combination of basis functions in $X_r \subseteq X_h$, 
satisfies, for all $v_r \in X_r$,
\begin{equation}\label{eqWF_ROM}
  (\partial_t u_r, v_r) + d(p)(\partial_x u_r, \partial_x v_r) = (f(u_r; p), v_r).
\end{equation}
With these two weak forms at hand,  we let 
\begin{equation}\label{eqSplit}
 \psi := u_h - R_r u_h \in X_h,  ~~ \phi := R_r u_h - u_r \in X_r.
\end{equation} 
%The projection error $\psi$ is controlled by the POD approximation quality and feeds into the bound through the functional $E(p)$ in~\eqref{eq22}; the second component $\phi$ is the residual that we estimate by testing the difference of~\eqref{eqWF_FEM} and~\eqref{eqWF_ROM} against $\phi$ itself.
Subtracting~\eqref{eqWF_ROM} and \eqref{eqWF_FEM},  while testing with $\phi$,  we obtain
\begin{multline}\label{eq28}
  (\partial_t (u_h - u_r), \phi) + d(p)(\partial_x(u_h - u_r), \partial_x \phi) \\
  = (f(u_h; p) - f(u_r; p), \phi).
\end{multline}
Using Ritz orthogonality~\eqref{eq16},  and~\eqref{eqSplit},  
we conclude that $ (\partial_x(u_h - u_r), \partial_x \phi) =   \|\partial_x \phi\|_{L^2}^2 \geq 0$.   Applying the same approach to $(\partial_t (u_h - u_r), \phi)$,  we obtain  
\begin{equation}\label{eq29}
  \tfrac{1}{2} \tfrac{d}{dt} \|\phi\|_{L^2}^2 \leq -(\partial_t \psi, \phi) + (f(u_h; p) - f(u_r; p), \phi).
\end{equation}
The Lipschitz bound~\eqref{eq5} combined with Young's inequality, the one-sided condition~\eqref{eq6}, and Young's inequality on the time-derivative term give the bounds on the three terms of~\eqref{eq29}, which collect into
\begin{equation}\label{eq30}
  \tfrac{d}{dt}\|\phi\|_{L^2}^2 \leq (2\mu + 1)\|\phi\|_{L^2}^2 + \|\partial_t \psi\|_{L^2}^2 + L_f^2 \|\psi\|_{L^2}^2.
\end{equation}
Now, having $\phi(\cdot, 0) = R_r u_h(\cdot, 0; p) - u_r(\cdot, 0; p) = 0$, we apply Gronwall's lemma to obtain 
\begin{equation*}
  \sup_{t \in [0,T]}\|\phi(\cdot, t)\|_{L^2}^2 \leq e^{(2\mu+1)T}\bigl(1 + L_f^2 T\bigr)\, E(p).
\end{equation*}
Hence,  \eqref{eq23} follows since $\|u_h - u_r\|_{L^2} \leq \|\psi\|_{L^2} + \|\phi\|_{L^2}$.

\subsubsection{The uniform FEM-ROM error bound}
~\\[2pt]
We first establish Lipschitzness of the map  $p \mapsto u_h(t,x;p)$.   To do so,  we pick  $p, q \in P$ and consider the error
\begin{equation}\label{eqW}
  w(\cdot, \cdot;p;q) := u_h(\cdot, \cdot; p) - u_h(\cdot, \cdot; q).
\end{equation}

Writing~\eqref{eqWF_FEM} at $p$ and at $q$, subtracting, and testing with $w \in X_h$, leads to
\begin{equation}\label{eqDiffEq}
  \tfrac{1}{2}\tfrac{d}{dt}\|w\|_{L^2}^2 + d(p)\|\partial_x w\|_{L^2}^2 = -\mathcal{A}(t) + \mathcal{B}(t),
\end{equation}
\begin{equation}\label{eqA}
  \mathcal{A}(t) := \bigl(d(p) - d(q)\bigr)\bigl(\partial_x u_h(\cdot, t; q),\, \partial_x w\bigr),
\end{equation}
\begin{equation}\label{eqB}
  \mathcal{B}(t) := \bigl(f(u_h(\cdot, t; p); p) - f(u_h(\cdot, t; q); q),\, w\bigr).
\end{equation}

Using Assumption~\ref{ass2}, we obtain $|d(p) - d(q)| \leq L_d \|p - q\|$. Applying Young's inequality, we find
\begin{multline}\label{eqAbound}
  |\mathcal{A}(t)| \leq \tfrac{L_d^2}{2d_{\min}}\|p - q\|^2 \|\partial_x u_h(\cdot, t; q)\|_{L^2}^2 
  + \tfrac{d_{\min}}{2}\|\partial_x w\|_{L^2}^2.
\end{multline}

Next, we note that
\begin{multline}\label{eqSplitB}
  f(u_h(\cdot, t; p); p) - f(u_h(\cdot, t; q); q) \\
  = \bigl[f(u_h(\cdot, t; p); p) - f(u_h(\cdot, t; q); p)\bigr] \\
  + \bigl[f(u_h(\cdot, t; q); p) - f(u_h(\cdot, t; q); q)\bigr].
\end{multline}
Using Lipschitzness of $f$ and Assumption~\ref{ass2}, we obtain
\begin{equation}\label{eqBbound}
  |\mathcal{B}(t)| \leq L_f\, \|w\|_{L^2}^2 + \tfrac{K_f^2\, L}{2}\, \|p - q\|^2 + \tfrac{1}{2}\, \|w\|_{L^2}^2,
\end{equation}
\begin{equation}\label{eqKf}
  K_f := Q\, L_\theta\, \max_{l \in \{1, \ldots, Q\}} \max_{u \in [0, M]} |f_l(u)|.
\end{equation}
Cauchy--Schwarz and Young's inequalities were used to obtain~\eqref{eqBbound}.

Substituting~\eqref{eqAbound} and~\eqref{eqBbound} into~\eqref{eqDiffEq}, we obtain
\begin{equation}\label{eqGronwallReady}
  \tfrac{d}{dt}\|w\|_{L^2}^2 \leq (2 L_f + 1)\, \|w\|_{L^2}^2 + \kappa(t)\, \|p - q\|^2,
\end{equation}
\begin{equation}\label{eqKappa}
  \kappa(t) := K_f^2\, L + \tfrac{L_d^2}{d_{\min}}\, \|\partial_x u_h(\cdot, t; q)\|_{L^2}^2.
\end{equation}

Next, we use Assumption~\ref{ass2} to conclude that $\|w(\cdot, 0;p;q)\|_{L^2} \leq K_0 \|p - q\|$, with
\begin{equation}\label{eqK0}
  K_0 := Q\, L_\xi\, \max_{l \in \{1, \ldots, Q\}} \|u_0^l\|_{H^1(\Omega)}.
\end{equation}
Furthermore, we test~\eqref{eqWF_FEM} with $v_h = u_h$ to obtain
\begin{equation}\label{eqadded2}
  \tfrac{1}{2}\tfrac{d}{dt}\|u_h\|_{L^2}^2 + d(p)\|\partial_x u_h\|_{L^2}^2 = (f(u_h;p), u_h).
\end{equation}
The discrete maximum principle on the uniform mesh~\cite[Lemma~6.1, p.~96]{thomee2006} gives $\|u_h\|_{L^\infty} \leq M$, hence $|(f(u_h;p), u_h)| \leq M_1\, M\, L$, where
\begin{equation}\label{eqM1}
  M_1 := \sup_{(u,p) \in [0,M] \times P} |f(u;p)|.
\end{equation}
Having $d(p) \geq d_{\min}$ and integrating~\eqref{eqadded2} over $[0,T]$, we find
\begin{equation}\label{eqDxUh}
  \int_0^T \|\partial_x u_h(\cdot, t; q)\|_{L^2}^2\, \dd t \leq \tfrac{L\, (M^2 + 2\, T\, M_1\, M)}{2\, d_{\min}}.
\end{equation}

As a result, Gronwall's lemma applied to~\eqref{eqGronwallReady}, combined with~\eqref{eqDxUh}, yields
\begin{equation}\label{eq33}
  \sup_{t \in [0, T]} \|w(\cdot, t;p;q)\|_{L^2}^2 \leq G\, \|p - q\|^2,
\end{equation}
where $G$ is the explicit constant
\begin{multline}\label{eqG}
  G := e^{(2 L_f + 1) T} \Big[K_0^2 + L K_f^2 T 
  + \tfrac{L_d^2 L(M^2 + 2 T M_1 M)}{2 d_{\min}^2}\Big].
\end{multline}

As a second step, we establish Lipschitzness of $p \mapsto \partial_t u_h(t,x;p)$. Indeed, under Assumption~\ref{ass2}, the right-hand side of the FEM coefficient ODE
\begin{equation}\label{eqGp}
  \dot a = M^{-1}\bigl(-d(p)\, K\, a + F(a;p)\bigr)
\end{equation}
is $C^{1,1}$ on $\R^{n_h}$, so by~\cite[Th.~1.1, p.~8]{hartman1982} the solution $a(\cdot;p)$ is $C^2$ on $[0,T]$. Hence the weak form~\eqref{eqWF_FEM} can be differentiated in time, giving for every $v_h \in X_h$,
\begin{multline}\label{eqWF_dt}
  (\partial_{tt} u_h, v_h) + d(p)\, (\partial_x \partial_t u_h, \partial_x v_h) \\
  = (\partial_u f(u_h;p)\, \partial_t u_h,\, v_h).
\end{multline}

The Lipschitz analysis of $\partial_t u_h$ requires uniform-in-$p$ bounds on $\partial_t u_h$, $\partial_x \partial_t u_h$, and $\partial_{tt} u_h$. Evaluating~\eqref{eqWF_FEM} at $t=0$, using the Neumann condition $\partial_x u_0^l|_{\partial \Omega}=0$ and the Ritz property of $\Pi_h$, we obtain
\begin{multline}\label{eqDtUh0}
  \partial_t u_h(\cdot, 0; p)
  = P_{L^2}^{X_h}\bigl[d(p) \partial_{xx} u_0(\cdot;p) + f(\Pi_h u_0(\cdot;p);p)\bigr],
\end{multline}
where $P_{L^2}^{X_h}: L^2(\Omega) \to X_h$ is the $L^2$-orthogonal projector. Define the initial-profile bounds, finite by Assumption~\ref{ass2}, as
\begin{equation}\label{eqDk}
  D_k := Q\, \sup_{p \in P} |\xi_l(p)|\, \max_{l \in \{1,\ldots,Q\}} \|\partial_x^k u_0^l\|_{L^2}, \;\; k \in \{1,2,3\}.
\end{equation}

The $L^2$-stability $\|P_{L^2}^{X_h} v\|_{L^2} \leq \|v\|_{L^2}$, combined with~\eqref{eqM1} and~\eqref{eqDk}, produces
\begin{equation}\label{eqD0}
  \|\partial_t u_h(\cdot, 0; p)\|_{L^2} \leq D_0,
\end{equation}
\begin{equation}\label{eqD0def}
  D_0 := d_{\max}\, D_2 + M_1\, \sqrt{L}.
\end{equation}
Applying $\partial_x$ to~\eqref{eqDtUh0} and using the $H^1$-stability of $P_{L^2}^{X_h}$~\cite[Theorem~4]{crouzeix1987},
\begin{equation}\label{eqCP}
  \|P_{L^2}^{X_h} v\|_{H^1(\Omega)} \leq C_P\, \|v\|_{H^1(\Omega)} \qquad \forall v \in H^1(\Omega),
\end{equation}
together with $\|\partial_x \Pi_h u_0(\cdot;p)\|_{L^2} \leq D_1$ from the definition of $\Pi_h$, we obtain
\begin{equation}\label{eqD4}
  \|\partial_x \partial_t u_h(\cdot, 0; p)\|_{L^2} \leq D_4,
\end{equation}
\begin{equation}\label{eqD4def}
  D_4 := C_P\, \sqrt{D_0^2 + (d_{\max}\, D_3 + L_f\, D_1)^2}.
\end{equation}

Testing~\eqref{eqWF_FEM} with $v_h = \partial_t u_h$, integrating over $[0,T]$, dropping the non-negative terminal $\tfrac{d(p)}{2}\|\partial_x u_h(\cdot, T; p)\|_{L^2}^2$, and bounding the initial $\tfrac{d(p)}{2}\|\partial_x u_h(\cdot, 0; p)\|_{L^2}^2$ above by $\tfrac{d_{\max}}{2} D_1^2$ via~\eqref{eqDk}, we obtain
\begin{equation}\label{eqM2}
  \int_0^T \|\partial_t u_h(\cdot, t; p)\|_{L^2}^2\, \dd t \leq M_2,
\end{equation}
\begin{equation}\label{eqM2def}
  M_2 := T\, M_1^2\, L + d_{\max}\, D_1^2.
\end{equation}

Testing~\eqref{eqWF_dt} with $v_h = \partial_t u_h$ produces the identity
\begin{multline}\label{eqDtIdent}
  \tfrac{1}{2}\tfrac{d}{dt}\|\partial_t u_h\|_{L^2}^2 + d(p)\, \|\partial_x \partial_t u_h\|_{L^2}^2 \\
  = (\partial_u f(u_h;p)\, \partial_t u_h,\, \partial_t u_h).
\end{multline}
Integrating~\eqref{eqDtIdent} over $[0,T]$, dropping the non-negative terminal term, bounding the initial $\tfrac{1}{2}\|\partial_t u_h(\cdot,0;p)\|_{L^2}^2$ by $\tfrac{1}{2} D_0^2$ via~\eqref{eqD0}, and using $|\partial_u f| \leq L_f$ together with~\eqref{eqM2}, we obtain
\begin{equation}\label{eqE1}
  \int_0^T \|\partial_x \partial_t u_h(\cdot, t; p)\|_{L^2}^2\, \dd t \leq E_1,
\end{equation}
\begin{equation}\label{eqE1def}
  E_1 := \tfrac{D_0^2 + 2\, L_f\, M_2}{2\, d_{\min}}.
\end{equation}

Testing~\eqref{eqWF_dt} with $v_h = \partial_{tt} u_h$ produces the identity
\begin{multline}\label{eqDttIdent}
  \|\partial_{tt} u_h\|_{L^2}^2 + \tfrac{d(p)}{2}\tfrac{d}{dt}\|\partial_x \partial_t u_h\|_{L^2}^2 \\
  = (\partial_u f(u_h;p) \partial_t u_h,\partial_{tt} u_h).
\end{multline}
Young's inequality on the right-hand side, integration over $[0,T]$, dropping the non-negative terminal term, bounding the initial $\tfrac{d(p)}{2}\|\partial_x \partial_t u_h(\cdot,0;p)\|_{L^2}^2$ by $\tfrac{d_{\max}}{2} D_4^2$ via~\eqref{eqD4}, together with~\eqref{eqM2}, give
\begin{equation}\label{eqTheta}
  \int_0^T \|\partial_{tt} u_h(\cdot, t; p)\|_{L^2}^2\, \dd t \leq \Theta,
\end{equation}
\begin{equation}\label{eqThetaDef}
  \Theta := d_{\max}\, D_4^2 + L_f^2\, M_2.
\end{equation}

Writing~\eqref{eqWF_dt} at $p$ and at $q$, subtracting, and testing with $\partial_t w \in X_h$ produce an identity similar to~\eqref{eqDiffEq}:
\begin{multline}\label{eqDiffEqW}
  \tfrac{1}{2}\tfrac{d}{dt}\|\partial_t w\|_{L^2}^2 + d(p)\, \|\partial_x \partial_t w\|_{L^2}^2 
  = -\widetilde{\mathcal{A}}(t) + \widetilde{\mathcal{B}}(t),
\end{multline}
\begin{equation}\label{eqAtilde}
  \widetilde{\mathcal{A}}(t) := (d(p) - d(q))(\partial_x \partial_t u_h(\cdot, t; q),\, \partial_x \partial_t w),
\end{equation}
\begin{multline}\label{eqBtilde}
  \widetilde{\mathcal{B}}(t) := \bigl(\partial_u f(u_h(p); p)\, \partial_t u_h(\cdot, t; p) \\
  - \partial_u f(u_h(q); q)\, \partial_t u_h(\cdot, t; q),\, \partial_t w\bigr).
\end{multline}

The bound on $\widetilde{\mathcal{A}}$ replicates~\eqref{eqAbound} with $\partial_x u_h$ replaced by $\partial_x \partial_t u_h$:
\begin{multline}\label{eqAtildeBound}
  |\widetilde{\mathcal{A}}(t)| \leq \tfrac{L_d^2}{2\, d_{\min}}\, \|p - q\|^2\, \|\partial_x \partial_t u_h(\cdot, t; q)\|_{L^2}^2 \\
  + \tfrac{d_{\min}}{2}\,\|\partial_x \partial_t w\|_{L^2}^2.
\end{multline}
For $\widetilde{\mathcal{B}}$, the $C^{1,1}$ regularity of $f$ in Assumption~\ref{ass2} gives the Lipschitz constant of $\partial_u f$ in $u$,
\begin{equation}\label{eqLfPrime}
  L_f' := \sup_{(u,p) \in [0,M] \times P} |\partial_{uu} f(u;p)|,
\end{equation}
and the affine structure in Assumption~\ref{ass2} gives the Lipschitz constant of $\partial_u f$ in $p$,
\begin{equation}\label{eqKfTilde}
  \tilde K_f := Q\, L_\theta\, \max_{l \in \{1,\ldots,Q\}} \max_{u \in [0,M]} |f_l'(u)|.
\end{equation}
The pointwise bound $|\partial_u f(u_h(p);p) - \partial_u f(u_h(q);q)| \leq L_f'\, |w| + \tilde K_f\, \|p-q\|$, combined with $|\partial_u f| \leq L_f$, Cauchy--Schwarz, and Young's inequality, yields
\begin{equation}\label{eqBtildeBound}
\begin{aligned}
  |\widetilde{\mathcal{B}}(t)| \leq {}
  & \bigl(L_f + \tfrac{1}{2}\bigr)\, \|\partial_t w\|_{L^2}^2 \\
  &+ (L_f')^2\, \|\partial_t u_h(\cdot, t; p)\|_{L^\infty}^2\, \|w\|_{L^2}^2 \\
  &+ \tilde K_f^2\, \|\partial_t u_h(\cdot, t; p)\|_{L^2}^2\, \|p - q\|^2.
\end{aligned}
\end{equation}

Substituting~\eqref{eqAtildeBound} and~\eqref{eqBtildeBound} into~\eqref{eqDiffEqW}, folding $\tfrac{d_{\min}}{2}\|\partial_x \partial_t w\|_{L^2}^2$ into the diffusion term, multiplying by~$2$, and using $\|w\|_{L^2}^2 \leq G\, \|p - q\|^2$ from~\eqref{eq33}, we obtain
\begin{equation}\label{eqGronwallReadyW}
  \tfrac{d}{dt}\|\partial_t w\|_{L^2}^2 \leq (2 L_f + 1)\, \|\partial_t w\|_{L^2}^2 + \tilde\kappa(t)\, \|p - q\|^2,
\end{equation}
\begin{equation}\label{eqKappaTilde}
\begin{aligned}
  \tilde\kappa(t) := {}
  & \tfrac{L_d^2}{d_{\min}}\, \|\partial_x \partial_t u_h(\cdot, t; q)\|_{L^2}^2 + 2\, \tilde K_f^2\, \|\partial_t u_h(\cdot, t; p)\|_{L^2}^2\\
  &+ 2\, (L_f')^2\, G\, \|\partial_t u_h(\cdot, t; p)\|_{L^\infty}^2.
\end{aligned}
\end{equation}

The 1D Sobolev embedding $H^1(\Omega) \hookrightarrow L^\infty(\Omega)$~\cite[Th.~8.8, pp.~212]{brezis2011} gives the constant
\begin{equation}\label{eqCS}
  \|v\|_{L^\infty(\Omega)} \leq C_S\, \|v\|_{H^1(\Omega)} \qquad \forall v \in H^1(\Omega),
\end{equation}
so $\|\partial_t u_h\|_{L^\infty}^2 \leq C_S^2\, (\|\partial_t u_h\|_{L^2}^2 + \|\partial_x \partial_t u_h\|_{L^2}^2)$, and~\eqref{eqM2},~\eqref{eqE1} yield $\int_0^T \|\partial_t u_h\|_{L^\infty}^2\, \dd t \leq C_S^2\, (M_2 + E_1)$. Integrating~\eqref{eqKappaTilde} over $[0,T]$,
\begin{equation}\label{eqKappaIntegrated}
\begin{aligned}
  \int_0^T \tilde\kappa(t)\, \dd t \leq {}
  & \tfrac{L_d^2\, E_1}{d_{\min}} + 2\, \tilde K_f^2\, M_2\\
  &+ 2\, C_S^2\, (L_f')^2\, G\, (M_2 + E_1) .
\end{aligned}
\end{equation}

The initial value $\partial_t w(\cdot, 0;p;q)$ is controlled by evaluating~\eqref{eqDtUh0} at $p$ and at $q$ and using the $L^2$-stability of $P_{L^2}^{X_h}$, which yields $\|\partial_t w(\cdot, 0;p;q)\|_{L^2} \leq \tilde K_0\, \|p - q\|$, with
\begin{equation}\label{eqK0tilde}
  \tilde K_0 := L_d\, D_2 + d_{\max}\, \tilde D_2 + L_f\, K_0 + K_f\, \sqrt{L},
\end{equation}
\begin{equation}\label{eqD2tilde}
  \tilde D_2 := Q\, L_\xi\, \max_{l \in \{1,\ldots,Q\}} \|\partial_{xx} u_0^l\|_{L^2}.
\end{equation}
Gronwall's lemma applied to~\eqref{eqGronwallReadyW}, combined with~\eqref{eqKappaIntegrated}, yields
\begin{equation}\label{eq34}
  \sup_{t \in [0,T]} \|\partial_t w(\cdot, t)\|_{L^2}^2 \leq \tilde G\, \|p - q\|^2,
\end{equation}
where $\tilde G$ is the explicit constant
\begin{multline}\label{eqGtilde}
  \tilde G := e^{(2 L_f + 1) T}\Big[\tilde K_0^2 + \tfrac{L_d^2 E_1}{d_{\min}}\\
  + 2 C_S^2 (L_f')^2 G (M_2 + E_1)
  + 2\tilde K_f^2 M_2\Big].
\end{multline}

Now, to show Lipschitzness of $p \mapsto E(p)$, we let
\begin{equation}\label{lambda}
  \lambda := \sup_{v \in X_h \setminus \{0\}} \frac{\|R_r v\|_{L^2}}{\|v\|_{L^2}}.
\end{equation}
The projector $R_r: X_h \rightarrow X_r \subset L^2(\Omega)$ is a linear map between finite-dimensional subspaces of $L^2$; hence, bounded in the $L^2$ norm.

Now, for $\psi_p := u_h(\cdot, \cdot; p) - R_r u_h(\cdot, \cdot; p)$, having $\psi_p - \psi_q = (I - R_r) w$ and $\partial_t \psi_p - \partial_t \psi_q = (I - R_r) \partial_t w$, we use~\eqref{lambda} to obtain
\begin{equation}\label{eqPsiDiff}
  \|\psi_p - \psi_q\|_{L^2} \leq (1 + \lambda)\, \|w\|_{L^2},
\end{equation}
\begin{equation}\label{eqDtPsiDiff}
  \|\partial_t \psi_p - \partial_t \psi_q\|_{L^2} \leq (1 + \lambda)\, \|\partial_t w\|_{L^2}.
\end{equation}
Combining~\eqref{eqPsiDiff},~\eqref{eqDtPsiDiff} with~\eqref{eq33},~\eqref{eq34}, the identity $|a^2 - b^2| = (a + b)|a - b|$, Cauchy--Schwarz in time, and $\|u_h(\cdot, \cdot; p)\|_{L^2} \leq M\, \sqrt{L}$, we obtain
\begin{equation}\label{eq36}
  |E(p) - E(q)| \leq C_2\, \|p - q\|,
\end{equation}
\begin{equation}\label{eqC2}
  C_2 := 2\, (1 + \lambda)^2\, \Bigl[M\, \sqrt{L\, G} + \sqrt{M_2}\, \sqrt{T\, \tilde G}\Bigr].
\end{equation}

Next, we estimate $E(p_j)$ at each snapshot parameter, using the optimality of the POD basis, giving~\cite[Prop.~1, Lem.~3]{kunisch2001}
\begin{multline}\label{eqKV}
  \sum_{k=1}^{n_t} \|u_h(\cdot, t_k; p_j) - P_r u_h(\cdot, t_k; p_j)\|_{L^2}^2 \\
  + \sum_{k=1}^{n_t-1} \|\overline\partial u_h(\cdot, t_k; p_j) - P_r \overline\partial u_h(\cdot, t_k; p_j)\|_{L^2}^2 \\
  \leq \sum_{i=r+1}^{n_s} \sigma_i^2,
\end{multline}
where $P_r : X_h \to X_r$ is the $L^2$-orthogonal projector, verifying
\begin{equation}\label{eq35}
  \|v - R_r v\|_{L^2} \leq (1 + \lambda)\, \|v - P_r v\|_{L^2} \qquad \forall v \in X_h.
\end{equation}
Indeed, $P_r v \in X_r$ and the Ritz projector $R_r$ fixes every element of $X_r$. Hence $R_r P_r v = P_r v$, which gives $v - R_r v = (I - R_r)(v - P_r v)$. Taking the $L^2$ norm on both sides and using $\|I - R_r\| \leq 1 + \lambda$, \eqref{eq35} follows.

To bound $E(p_j)$, we need to bound $\sup_{t \in [0,T]} \|\psi_{p_j}(\cdot, t)\|_{L^2}^2$ and $\int_0^T \|\partial_t \psi_{p_j}(\cdot, t)\|_{L^2}^2\, \dd t$, with $\psi_{p_j}(\cdot, t) := u_h(\cdot, t; p_j) - R_r u_h(\cdot, t; p_j)$. Since $\psi_{p_j}(\cdot, t) = \psi_{p_j}(\cdot, t_k) + \int_{t_k}^{t} \partial_t \psi_{p_j}(\cdot, s)\, \dd s$, the triangular and Cauchy--Schwarz inequalities give
\begin{multline}\label{eqSupSub}
  \sup_{t \in [t_k, t_{k+1}]} \|\psi_{p_j}(\cdot, t)\|_{L^2}^2 \\
  \leq 2\, \|\psi_{p_j}(\cdot, t_k)\|_{L^2}^2 + 2\, \tau\, \int_{t_k}^{t_{k+1}} \|\partial_t \psi_{p_j}(\cdot, s)\|_{L^2}^2\, \dd s.
\end{multline}

The quotient $\overline\partial \psi_{p_j}(\cdot, t_k) = \tau^{-1} \int_{t_k}^{t_{k+1}} \partial_t \psi_{p_j}(\cdot, \sigma)\, \dd \sigma$ is the average of $\partial_t \psi_{p_j}$ over the subinterval. Its deviation from the pointwise value satisfies $\partial_t \psi_{p_j}(\cdot, s) - \partial_t \psi_{p_j}(\cdot, \sigma) = \int_\sigma^s \partial_{tt} \psi_{p_j}(\cdot, r)\, \dd r$. Applying Jensen's inequality, we obtain
\begin{multline}\label{eqIntSub}
  \int_{t_k}^{t_{k+1}} \|\partial_t \psi_{p_j}(\cdot, s) - \overline\partial \psi_{p_j}(\cdot, t_k)\|_{L^2}^2\, \dd s \\
  \leq \tau^2\, \int_{t_k}^{t_{k+1}} \|\partial_{tt} \psi_{p_j}(\cdot, s)\|_{L^2}^2\, \dd s.
\end{multline}
We now sum~\eqref{eqSupSub} and~\eqref{eqIntSub} over $k$ and using~\eqref{eq35} and~\eqref{eqKV}, we obtain
\begin{multline}\label{eqEpjResidual}
  E(p_j) \leq (1 + \lambda)^2\, (1 + T)\, \sum_{i=r+1}^{n_s} \sigma_i^2 \\
  + \tau^2\, \int_0^T \|\partial_{tt} \psi_{p_j}(\cdot, t)\|_{L^2}^2\, \dd t.
\end{multline}

Since $\psi_{p_j} = u_h(\cdot, \cdot; p_j) - R_r u_h(\cdot, \cdot; p_j)$ and $R_r$ is linear, $\|\partial_{tt} \psi_{p_j}\|_{L^2} \leq (1 + \lambda)\, \|\partial_{tt} u_h(\cdot, \cdot; p_j)\|_{L^2}$. Using~\eqref{eqTheta}, we obtain
\begin{equation}\label{eqThetaPsi}
  \sup_{p \in P} \int_0^T \|\partial_{tt} \psi_p(\cdot, t)\|_{L^2}^2\, \dd t \leq (1 + \lambda)^2 \Theta< \infty.
\end{equation}
Hence, by taking $\tau > 0$ such that
\begin{equation}\label{eqSnapshotResolution}
  \tau^2\, (1 + \lambda)^2\, \Theta \leq (1 + T)\, \sum_{i=r+1}^{n_s} \sigma_i^2,
\end{equation}
we obtain
\begin{equation}\label{eqEpj}
  E(p_j) \leq C_1\, \sum_{i=r+1}^{n_s} \sigma_i^2,
\end{equation}
\begin{equation}\label{eqC1def}
  C_1 := 4\, (1 + T)^2\, (1 + \lambda)^2.
\end{equation}
Combining~\eqref{eq36} and~\eqref{eqEpj}, using the radius $\rho_P$, \eqref{eq25} follows.

\section{Numerical Illustrations}\label{sec4}
We test our approach on instances of $\Sigma$ with
$\Omega=(0,1)$ and $T=1$. We consider  $\nh=100$ nodes and time step
$\tau=0.01$ for the FEM. The first example has uncertainty only in the dynamics; the second has uncertainty in both the dynamics and the
initial condition. 
The two models are:
\\
\smallskip
\noindent\textbf{Allen--Cahn}:
In this case, $f(u;p_1) := u(1\!-\!u)(u\!-\!p_1)$, $p_1\!\in\![0.3,0.7]$,
$d = p_2 \in [0.08,0.12]$,
$u_0(x) := 0.5+0.1\cos(\pi x)$. 

\noindent\textbf{Logistic}:
In this case, $f(u;p_1)=p_1\,u(1\!-\!u)$, $d=0.01$,
$p_1\!\in\![0.8,1.2]$,
$u_0(x;p_2)=p_2 \sin^2(\pi x/2)$, $p_2\!\in\![0.5,1.5]$.
Here $p_1$ enters the dynamics and $p_2$ the initial condition.

%\subsection{Assumptions, POD basis, and error bound}
Both models satisfy Assumptions~\ref{ass1}
and~\ref{ass2}: Allen--Cahn with $\dmin=0.08$, $\dmax=0.12$,
$M=1$, $\|u_0\|_{L^\infty}\leq 0.6$, and Logistic with $d=0.01$,
$M=1.5$, $\|u_0\|_{L^\infty}\leq 1.5$.
The reactions are polynomial in $u$ and the
initial conditions are $C^\infty(\overline\Omega)$ with
$\partial_x u_0^l|_{\partial\Omega}=0$, so the regularity
required by Proposition~\ref{prop3} is met.
For Allen--Cahn, $r=2$ suffices ($S\in\R^{100\times7960}$, relative tail energy $\sum_{i=r+1}^{n_s}\sigma_i^2 / \sum_{i=1}^{n_s}\sigma_i^2\leq 5\times10^{-7}$, $\rho_P\approx 2.9\times 10^{-2}$). For Logistic, the independent action of $p_1,p_2$ requires $r=6$ ($S\in\R^{100\times9900}$, tail energy $\leq 9\times10^{-6}$, $\rho_P\approx 7.0\times 10^{-2}$). 

%\subsection{Over-approximation of the ROM reachable set}
We compute $\Ccal(t)$ using CORA~\cite{althoff2010}, which propagates zonotopes through nonlinear ODEs with polynomial enclosures and satisfies $d_H(\Rr(t),\Ccal(t))\leq\eta$, so both conclusions of Theorem~\ref{thm1} hold. Parameters in $g$~\eqref{eq14} are tracked as extra states with zero derivative; those entering only the initial condition are absorbed into $\Ccal(0)$. Since $\eta=O(\mathrm{diam}(P)^2)$, we split $P$ into $K^2=16$ sub-boxes ($K=4$), run CORA on each (\texttt{alg=poly}, \texttt{tensorOrder=3}, $\tau=0.01$), and take the union.
%\subsection{Results}
Figures~\ref{fig:ex1_band}--\ref{fig:ex2_band}
show $\Rhat(t)$ enclosing the 200 test
trajectories at three time instances.
Table~\ref{tab:both} reports the error
components at $T=1$.
\begin{figure}[h]
\centering
\includegraphics[width=0.84\columnwidth]{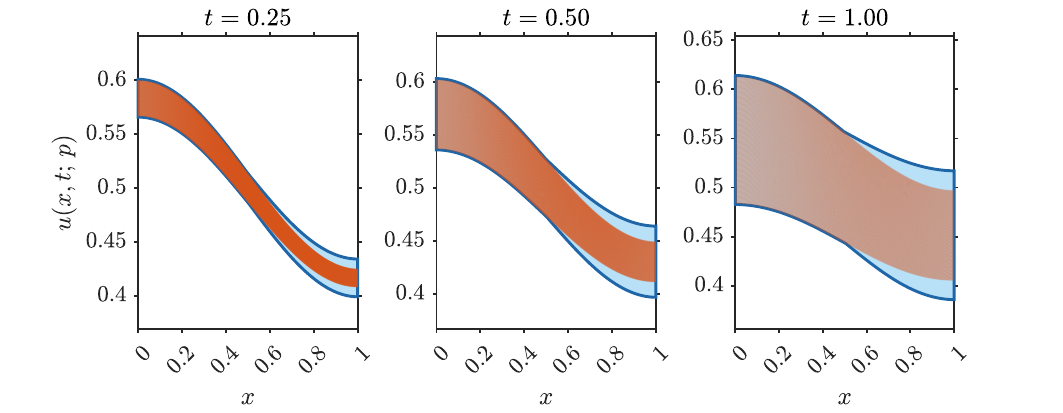}
\caption{Allen--Cahn: $\Rhat(t)$ (blue) and
200 FEM trajectories (orange).}
\label{fig:ex1_band}
\end{figure}
\begin{figure}[h]
\centering
\includegraphics[width=0.84\columnwidth]{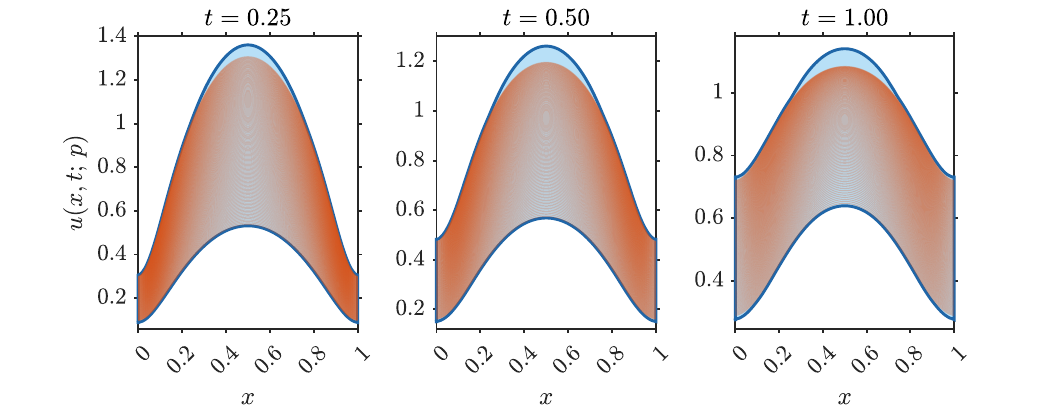}
\caption{Logistic PDE: $\Rhat(t)$ (blue) and
200 FEM trajectories (orange).}
\label{fig:ex2_band}
\end{figure}
\begin{table}[h]
\caption{}
\label{tab:both}
\centering
\setlength{\tabcolsep}{3pt}
\small
\begin{tabular}{lccccc}
\hline
 & $r$ & $\varepsilon_h$ & $\varepsilon_r$ &
   $\eta$ &  $t_{\rm on}$ \\
\hline
A.--C.
  & 2 & 1.7e-4 & 7.3e-4 & 3.1e-4
   & 186\,s \\[2pt]
Log.
  & 6 & 2.8e-4 & 2.9e-3 & 4.9e-3
  & 700\,s \\
\hline
\end{tabular}
\end{table}
For Allen--Cahn, the three errors $(\varepsilon_r,\varepsilon_h,\eta)$ in are comparable (all near $10^{-4}$); see Table~\ref{tab:both}, leading to the mismatch $2\varepsilon_h + 2\varepsilon_r + \eta = 2.1\times10^{-3}$. 

For the Logistic model, we note that $\eta > \varepsilon_r >> \varepsilon_h$. Tightening the bound in this case suggests more POD modes or a finer split of $P$. However, increasing $r$ slows each CORA run ($186$\,s at $r=2$ vs.
\ $700$\,s at $r=6$ for $K^2=16$), and increasing $K$ reduces $\eta$ quadratically but multiplies the number of runs by $K^2$. 

\vspace*{-5pt}
\section{Conclusion} \label{sec5}
We presented a method to approximate reachable sets for uncertain nonlinear reaction-diffusion equations. Proposition~\ref{prop3} establishes the bound on the model-reduction error, uniformly over the parameter set, under structural conditions on the parametric dependence, and Theorem~\ref{thm1} uses such bounds, together with the FEM and existing set-based reachability results for ODEs, to guarantee that the over-approximation set is sound. In future work, we envision relaxing these conditions to cover broader classes of nonlinear reaction-diffusion systems, including coupled systems and higher-dimensional spatial domains.


\vspace*{-5pt}
\begin{thebibliography}{99}
\bibitem{murray2003} J.~D.~Murray, \emph{Mathematical Biology II}, 3rd~ed., Springer, 2003.
\bibitem{pao1992} C.~V.~Pao, \emph{Nonlinear Parabolic and Elliptic Equations}, Plenum Press, 1992.
\bibitem{ouchdiri2025optimal} M.~A.~Ouchdiri, H.~Faquir, S.~Benjelloun, M.~Maghenem, I.~Otero-Muras, and A.~Saoud, ``An optimal-control framework for reaction diffusion systems with application to synthetic developmental biology,'' \emph{IEEE Conf. Decision and Control}, pp.~1925--1930, 2025.
\bibitem{girard2005} A.~Girard, ``Reachability of uncertain linear systems using zonotopes,'' \emph{Proc. HSCC}, LNCS 3414, pp.~291--305, 2005.
\bibitem{scott2013} J.~K.~Scott and P.~I.~Barton, ``Improved relaxations for the parametric solutions of ODEs using differential inequalities,'' \emph{J. Global Optim.}, vol.~57, pp.~143--176, 2013.
\bibitem{althoff2021} M.~Althoff, G.~Frehse, and A.~Girard, ``Set propagation techniques for reachability analysis,'' \emph{Annu. Rev. Control Robot. Auton. Syst.}, vol.~4, pp.~369--395, 2021.
\bibitem{kharkovskaia2016} T.~Kharkovskaia, D.~Efimov, A.~Polyakov, and J.~P.~Richard, ``Interval observers for PDEs: approximation approach,'' \emph{IFAC-PapersOnLine}, vol.~49, no.~18, pp.~915--920, 2016.
\bibitem{tran2018reachPDE} H.-D.~Tran, W.~Xiang, S.~Bak, and T.~T.~Johnson, ``Reachability analysis for one dimensional linear parabolic equations,'' \emph{IFAC-PapersOnLine}, vol.~51, no.~16, pp.~7--12, 2018.
\bibitem{kunisch2001} K.~Kunisch and S.~Volkwein, ``Galerkin proper orthogonal decomposition methods for parabolic problems,'' \emph{Numer. Math.}, vol.~90, pp.~117--148, 2001.
\bibitem{thomee2006} V.~Thom\'ee, \emph{Galerkin Finite Element Methods for Parabolic Problems}, 2nd~ed., Springer, 2006.
\bibitem{meyer2018} P.-J.~Meyer, A.~Devonport, and M.~Arcak, \emph{Interval Reachability Analysis}, Springer, 2021.
\bibitem{althoff2010} M.~Althoff, O.~Stursberg, and M.~Buss, ``Computing reachable sets of hybrid systems using a combination of zonotopes and polytopes,'' \emph{Nonlinear Anal. Hybrid Syst.}, vol.~4, pp.~233--249, 2010.
\bibitem{le2010reachability} C.~Le~Guernic and A.~Girard, ``Reachability analysis of linear systems using support functions,'' \emph{Nonlinear Anal. Hybrid Syst.}, vol.~4, no.~2, pp.~250--262, 2010.
\bibitem{rungger2018accurate} M.~Rungger and M.~Zamani, ``Accurate reachability analysis of uncertain nonlinear systems,'' \emph{Proc. HSCC}, pp.~61--70, 2018.


\bibitem{brezis2011}
H.~Brezis,
\emph{Functional Analysis, Sobolev Spaces and Partial Differential Equations}.
New York: Springer, 2011.

\bibitem{hartman1982}
P.~Hartman,
\emph{Ordinary Differential Equations}, 2nd~ed.,
Classics in Applied Mathematics, vol.~38.
Philadelphia, PA: SIAM, 2002.

\bibitem{crouzeix1987}
M.~Crouzeix and V.~Thom\'ee,
``The stability in $L_p$ and $W_p^1$ of the $L_2$-projection onto finite element function spaces,''
\emph{Math. Comp.}, vol.~48, no.~178, pp.~521--532, 1987.
\end{thebibliography}
\end{document}